\theoremstyle{plain}
\newtheorem{theorem}{Theorem}[section]
\newtheorem*{theorem*}{Theorem}
\newtheorem{definition}[theorem]{Definition}
\newtheorem{lemma}[theorem]{Lemma}
\newtheorem{prop}[theorem]{Proposition}
\newtheorem*{prop*}{Proposition}
\newtheorem{cor}[theorem]{Corollary}
\newtheorem*{cor*}{Corollary}
\newtheorem{rem}[theorem]{Remark}
\newtheorem{ex}[theorem]{Example}
\newtheorem*{mt*}{Main Theorem}
\newcommand\C{{\mathbb C}}
\newcommand\D{{\mathbb D}}
\newcommand\R{{\mathbb R}}
\newcommand\Z{{\mathbb Z}}
\newcommand\T{{\mathbb T}}
\newcommand{\Cpf}{$\mathcal{C}^\infty$-pure-and-full}
\newcommand{\Cf}{$\mathcal{C}^\infty$-full}
\newcommand{\Cp}{$\mathcal{C}^\infty$-pure}
\newcommand{\pf}{pure-and-full}
\newcommand{\Pf}{Pure-and-full}
\newcommand{\fu}{full}
\newcommand{\pu}{pure}
\newcommand{\correnti}{\mathcal{D}}
\newcommand{\p}{pure}
\begin{document}
\title[On non-pure forms on almost complex manifolds]{On non-pure forms on almost complex manifolds}
\author{Richard Hind, Costantino Medori, Adriano Tomassini}
\date{\today}
\address{Department of Mathematics \\
University of Notre Dame \\
Notre Dame, IN 46556}
\email{hind.1@nd.edu}
\address{Dipartimento di Matematica e Informatica\\
Universit\`{a} di Parma \\
Parco Area delle Scienze 53/A, 43124 \\
Parma, Italy}
\email{costantino.medori@unipr.it}
\email{adriano.tomassini@unipr.it}
\thanks{Partially supported by {\em Fondazione Bruno Kessler-CIRM (Trento)} and by GNSAGA
of INdAM}
\keywords{pure and full almost complex structure;  $J$-invariant form; $J$-anti-invariant form}
\subjclass[2010]{32Q60, 53C15, 58A12}
\begin{abstract}
In \cite{li-zhang} T.-J. Li and W. Zhang defined an almost complex structure $J$ on a manifold $X$ to be {\em \Cpf}, if the second de Rham cohomology group can be decomposed as a direct sum of the subgroups whose elements are cohomology classes admitting $J$-invariant and $J$-anti-invariant representatives. It turns out (see T. Draghici, T.-J. Li and W. Zhang \cite{draghici-li-zhang}) that any almost complex structure on a $4$-dimensional compact manifold is \Cpf.  We study the $J$-invariant and $J$-anti-invariant cohomology subgroups on almost complex manifolds, possibly non compact. In particular, we prove an analytic continuation result for anti-invariant forms on almost complex manifolds.
\end{abstract}

\maketitle
\section*{Introduction}
Let $(X,J,\omega)$ be a compact K\"ahler manifold of complex dimension $n$. Then the celebrated Hodge decomposition Theorem states the $(p,q)$-decomposition on forms transfers at the cohomology level, namely, $H^k(X;\C)\cong \bigoplus_{p+q=k}H^{p,q}_{\overline{\partial}}(X)$, where
$H^{p,q}_{\overline{\partial}}(X)$ denotes the $(p,q)$-Dolbeault group. In order to generalize the previous decomposition to arbitrary compact almost complex manifolds, in \cite{li-zhang} T.-J. Li and W. Zhang introduced the notion of \Cpf \ almost complex structures. An almost complex structure $J$ on a $2n$-dimensional manifold $X$ is said to be {\em \Cpf} if $H^{2} (X; \R) =H^{(1,1)}_J (X)_{\R} \oplus H^{(2,0),(0,2)}_J (X)_{\R}$, where $H^{(1,1)}_J (X)_{\R}$ and $H^{(2,0),(0,2)}_J (X)_{\R}$ are the subgroups of the $2$-nd de Rham cohomology group of $X$, whose elements are cohomology classes represented by real pure forms of type $(1,1)$ and $(2,0)+(0,2)$, respectively, namely, the $J$-{\em invariant}, $J$-{\em anti-invariant} cohomology subgroups. More in general, $J$ is said to be {\em \Cp} if $H^{(1,1)}_J (X)_{\R} \cap H^{(2,0),(0,2)}_J (X)_{\R}=\{0\}$ and {\em \Cf} if $H^{2} (X; \R) =H^{(1,1)}_J (X)_{\R} + H^{(2,0),(0,2)}_J (X)_{\R}$. Similar notions of \Cp\ and \Cf\ almost complex structures at stage $k$ can be defined. By taking the complex of $(p,q)$-currents one can define {\em pure} and {\em full} almost complex structures.\newline
In \cite{draghici-li-zhang}, T. Dr\u{a}ghici, T.-J. Li and W. Zhang showed that an almost complex structure $J$ on a $4$-dimensional compact manifold $X$ is
\Cpf, namely $H^{2} (X; \R) =H^{(1,1)}_J (X)_{\R} \oplus H^{(2,0),(0,2)}_J (X)_{\R}$. The last decomposition can be considered as a Hodge decomposition for $4$-dimensional almost complex compact manifolds. Furthermore, setting
$$
\mathcal{K}^t_J = \left\{\left[\omega\right]\in H^2(X;\R) \,\,\,\vert\,\,\, \omega \text{ is symplectic and }J\text{ is }\omega\text{-tamed}\right\}
$$
the {\em tamed symplectic cone} and
$$
\mathcal{K}^c_J =\left\{\left[\omega\right]\in H^2(X;\R) \,\,\,\vert\,\,\, \omega \text{ is symplectic and } J\text{ is }
\omega\text{-compatible}\right\}
$$
the {\em compatible symplectic cone}, if $J$ is a complex structure on a compact manifold $M$ which admits a K\"ahler metric, then
$$
\mathcal{K}^t_J \;=\; \mathcal{K}^c_J \;+\; \left(\left( H^{2,0}_{\overline{\partial}}\left(X\right)
\,\oplus\, H^{0,2}_{\overline{\partial}}\left(X\right) \right) \,\cap\, H^2\left(X;\R\right) \right)
$$
(see \cite{li-zhang}). For a non-integrable almost complex structure $J$, in \cite{li-zhang} it is proved that if $J$ is a \Cf\  almost complex structure which carries an almost K\"ahler metric, then
$$
\mathcal{K}^t_J \;=\; \mathcal{K}^c_J \,+\, H^{(2,0),(0,2)}_J(X)_\R.
$$
Hence the subgroup $H^{(2,0),(0,2)}_J(X)_\R\subset H^2(M;\R)$ can be thought as a generalization of real Dolbeault group $\left( H^{2,0}_{\overline{\partial}}\left(X\right)
\,\oplus\, H^{0,2}_{\overline{\partial}}\left(X\right) \right) \,\cap\, H^2\left(X;\R\right)$ to the non-integrable case.\newline
For other results on \Cpf  \ almost complex structures see e.g. \cite{draghici-li-zhang1}, \cite{angella-tomassini}, \cite{angella-tomassini-zhang}.

In dimension higher than four, there are examples of non \Cp \ and non \Cf\  almost complex structures on compact manifolds. Nevertheless, in \cite{draghici-li-zhang} and \cite{fino-tomassini}, it is proved that if $(X,\omega)$ is a compact symplectic manifold of dimension $2n$, then every $\omega$-compatible almost complex structure $J$ is \Cp.\smallskip

In this paper we are interested in studying the $J$-invariant and $J$-anti-invariant respectively cohomology subgroups on almost complex manifolds $X^{2n}$, possibly non compact. In particular we will search for pairs $(\omega,J)$ where $\omega$ is either a $J$-invariant or $J$-anti-invariant closed $2$-form. We are also interested in determining whether $\omega$ can also be taken to be symplectic. We note that symplectic forms always have compatible almost complex structures, which are invariant in our language. Therefore in the symplectic case we will focus on finding anti-invariant almost complex structures.

After some preliminaries in section \ref{preliminaries}, in section \ref{compact} we start with the observation that if $n$ is odd and $(\omega,J)$ is an anti-invariant pair then $\omega^n$ is identically zero, see Proposition \ref{non-anti-invariant}. In particular $\omega$ cannot be symplectic. However it is not true that lower powers of $\omega$ must vanish, even in cohomology. In Example \ref{nonnull-square} we provide an example of a $2$-cohomology class on a $6$-dimensional compact nilmanifold $N$ endowed with a left-invariant almost complex structure $J$, which belongs to $H^{(1,1)}_J (N)_{\R} \cap H^{(2,0),(0,2)}_J (N)_{\R}$ and whose square is non-zero.

In searching for anti-invariant forms with $\omega$ symplectic then we must assume that $n$ is even. We consider the case of open manifolds. Theorem \ref{anti-invariant-symplectically-trivial} says that on an open symplectic $4$ manifold $(X^4, \omega)$ there exists an anti-invariant $J$ if and only if the tangent bundle is symplectically trivial. However Proposition \ref{pontrjagin} shows that this is no longer true in higher dimensions. If we do not fix the symplectic form, Theorem \ref{opensymp} says that given any open manifold $X^{2n}$ with $n$ even and $TX$ trivial, and any cohomology class $\mathfrak{a} \in H^2(X, \R)$, there exists an anti-invariant pair $(\omega,J)$ with $[\omega] = \mathfrak{a}$.

In the remainder of the paper we are motivated by the problem of finding anti-invariant pairs $(\omega,J)$ in a given cohomology class without the restrictive symplectic assumption. As in Proposition \ref{non-anti-invariant} it is not hard to show, for example, that on a $6$-manifold such an $\omega$ must everywhere have rank $0$ or $4$. There are no other pointwise restrictions. Still, this seems restrictive, but we show in section \ref{duals} that for many codimension $2$ submanifolds $W$ we can find closed forms $\omega$ supported in arbitrarily small tubular neighborhoods such that $\omega$ has everywhere rank $0$ or $4$ and $[\omega]$ is Poincar\'{e} dual to $W$. Given this, it is natural to ask whether anti-invariant pairs can be constructed using cut and paste methods.

Section \ref{continuation} shows that this is impossible, indeed, despite the rank condition, the compactly supported forms constructed in section \ref{duals} have no anti-invariant almost complex structures. This is a consequence of the unique continuation theorem, see Theorem \ref{analytic-continuation-6}. This says that given an anti-invariant pair $(\omega,J)$, if $\omega$ vanishes on an open subset then it must vanish everywhere on a connected $X^{2n}$.
\smallskip

\noindent {\em Acknowledgements.} We would like to thank {\em Fondazione Bruno Kessler-CIRM (Trento)} for their support and very pleasant working environment. We also would like to thank Daniele Angella for useful comments.
\smallskip

\section{Preliminaries}\label{preliminaries}
\subsection{\Cpf\   almost complex structures}
Let $X$ be a $2n$-dimensional manifold (without boundary) and $J$ be a smooth almost complex structure on $X$.  Let
$(T^{1,0}X)^*$ and $(T^{0,1}X)^*$ be the bundle of complex $1$-forms on $X$ of type $(1,0)$ and $(0,1)$, respectively. Denote
by $\Lambda_\C^k(X)$ the bundle of $\mathcal{C}^\infty$ complex $k$-forms on $X$ and by the same symbol $\Lambda^k_\C(X)=\Gamma(M,\Lambda_\C^k(X))$ the space of sections of $\Lambda_\C^k(X)$. Then, as usual, setting
$\Lambda^{p,q}_J(X)=\Lambda^p\left((T^{1,0}X)^*\right)\wedge\Lambda^q\left((T^{0,1}X)^*\right)$, the space of complex $k$-forms decomposes as
$$
\Lambda^k_\C(X) =\bigoplus _{p+q=k}\Lambda^{p,q}_J(X)\,,
$$
where $\Lambda^{p,q}_J(X) =\Gamma(M,\Lambda^{p,q}_J(X))$.\newline
Accordingly to the above decomposition, the space $\Lambda^k_{\R} (X)$ of real smooth differential $k$-forms has a type decomposition:
$$
\Lambda^k_{\R} (X) = \bigoplus_{p + q = k} \Lambda^{(p,q),(q,p)}_J (X)_{\R},
$$
where
$$
\Lambda^{(p,q),(q,p)}_J (X)_{\R} = \left\{ \alpha \in \Lambda^{p,q}_J (X) \oplus \Lambda^{q,p}_J (X)
\, \vert \, \alpha = \overline \alpha \right\}\,.
$$
In particular, since $J$ acts as the identity on $\Lambda^{(1,1)}_J (X)_{\R}$ and as minus the identity on
$\Lambda^{(2,0),(0,2)}_J (X)_{\R}$, sometimes we will set
$$
\Lambda^+_J=\Lambda^{(1,1)}_J (X)_{\R}\,,\quad  \Lambda^-_J=\Lambda^{(2,0),(0,2)}_J (X)_{\R}
$$
and we will refer to them as the spaces of {\em $J$-invariant} and {\em $J$-anti-invariant} forms, respectively.

For a finite set $S$ of pairs of integers, let
$$
{\mathcal Z}^{S} _J = \displaystyle{\bigoplus_{(p, q) \in S}} {\mathcal Z}^{(p,q),(q,p)}_J, \quad
{\mathcal B}^{S} _J = \bigoplus_{(p, q) \in S} {\mathcal B}^{(p,q),(q,p)}_J,
$$
where
$$
{\mathcal Z}^{(p,q),(q,p)}_J=\{\alpha\in\Lambda^{(p,q),(q,p)}_J (X)_{\R} \,\,\,\vert\,\,\, d\alpha=0\}
$$
and
$$
{\mathcal B}^{(p,q),(q,p)}_J=\{\beta\in\Lambda^{(p,q),(q,p)}_J (X)_{\R}\,\,\,\vert\,\,\, \hbox{\rm there exists}\,\, \gamma\,\,\hbox{\rm such that}\,\, \beta=d\gamma\}\,.
$$
Denoting by $\mathcal{B}$ the space of $d$-exact forms, we have that
$$
\frac{{\mathcal Z}^{S} _J}{{\mathcal B}^{S} _J }=\frac{{\mathcal Z}^{S} _J}{\mathcal{B}\cap{\mathcal Z}^{S} _J }=\frac{{\mathcal Z}^{S} _J}{{\mathcal B} }\,.
$$
Hence, there is a natural inclusion
$$
\rho_S: {\mathcal Z}^{S} _J/{\mathcal B}^{S} _J \to {\mathcal Z}/{\mathcal B}\,.
$$
As in \cite{li-zhang} we will write
$\rho_S ({\mathcal Z}^{S} _J/{\mathcal B}^{S} _J)$ simply as ${\mathcal Z}^{S} _J/{\mathcal B}^{S} _J$
and we may define the cohomology subgroups
$$
H^S_J (X)_{\R}= \left\{ [\alpha ] \,\, \vert \, \,\alpha \in {\mathcal Z}^{S} _J \right\} =
\frac {{\mathcal Z}^{S} _J } {\mathcal B}.
$$
In particular, according to the above definition, there is a natural inclusion
$$
H^{(1,1)}_J (X)_{\R} + H^{(2,0),(0,2)}_J (X)_{\R} \subseteq
H^{2} (X, \R).
$$
We recall the following (see \cite[Definitions 2.2 and 2.3, Lemma 2.2]{li-zhang})
\begin{definition}
A smooth almost complex structure $J$ on $X$ is said to be
\begin{itemize}
\item {\em \Cp}\  if
$$
H^{(1,1)}_J (X)_{\R} \cap H^{(2,0),(0,2)}_J (X)_{\R}=\{0\};
$$
\item {\em \Cf}\  if
$$
H^2(X;\R)=H^{(1,1)}_J (X)_{\R} + H^{(2,0),(0,2)}_J (X)_{\R};
$$
\item {\em \Cpf} if
$$
H^{2} (X; \R) =H^{(1,1)}_J (X)_{\R} \oplus H^{(2,0),(0,2)}_J (X)_{\R}.
$$
\end{itemize}
\end{definition}
\subsection{\Pf\   almost complex structures} Let $(X,J)$ be a compact $2n$-dimensional almost complex manifold. Denote  by
$\correnti_k(X)$ the space of \emph{currents} of dimension $k$ (or degree $(2n-k)$), i.e., the topological dual of the space
$\Lambda^k(X)$ of $r$-forms on $X$ (see e.g  \cite{derham}, \cite{demailly-agbook}). The exterior differential $d$ on
$\Lambda^\bullet(X)$ induces a differential on $\correnti_\bullet (X)$, still denoted by $d$.
Then the de Rham homology $H_\bullet(X;\R)$ is the cohomology of the
differential complex $\left(\correnti_\bullet(X),\,d\right)$ and $H^k_{dR}(X;\R)\,\simeq\,H_{2n-k}(X;\R)$. We will denote by
$\mathcal{P}:H_{2n-k}(X;\R)\to H^k_{dR}(X;\R)$ this isomorphism. \newline
As in the case of complex forms, the almost complex structure $J$ induces a bi-grading on the space $\correnti_{k}^{\C}(X)$ of complex currents of dimension $k$. Hence, the spaces of currents $\correnti_{p,q}(X)$ of bidimension $(p,q)$ are defined.

Accordingly, let $H_{(2,0),(0,2)}^J(X)_\R$ (respectively, $H_{(1,1)}^J(X)_\R$) be the subspace of $H_2(X;\R)$
given by the homology classes represented by a real current of bidimension $(2,0)+(0,2)$ (respectively, $(1,1)$).
Then we have the following definition due to T.-J. Li and W. Zhang.
\begin{definition}[{\cite[Definition 2.5, Lemma 2.7]{li-zhang}}]
 An almost complex structure $J$ on $X$ is said to be:
\begin{itemize}
 \item \emph{\p} if
$$ H_{(2,0),(0,2)}^J(X)_\R\;\cap\; H_{(1,1)}^J(X)_\R \;=\; \left\{0\right\} \;; $$
 \item \emph{\fu} if
$$ H_{(2,0),(0,2)}^J(X)_\R\;+\; H_{(1,1)}^J(X)_\R \;=\; H_2(X;\R)\;; $$
 \item \emph{\pf} if it is both \pu\ and \fu, i.e. if the following decomposition holds:
$$ H_{(2,0),(0,2)}^J(X)_\R\,\oplus\, H_{(1,1)}^J(X)_\R \,=\, H_2(X;\R) \;.$$
\end{itemize}
\end{definition}

The notions of  ${\mathcal C}^{\infty}$-pure/full and pure/full almost complex structures can be given in a similar way for $k$-forms and $k$-currents, respectively.

The relations between being \Cpf\ and being \pf\ are summarized in the following.
\begin{prop}[{see \cite[Proposition 2.5]{li-zhang}, \cite[Theorem 2.1]{angella-tomassini}}]\label{thm:implicazioni}
 The following relations between \Cpf\ and \pf\ concepts hold:
$$
\begin{array}{ccc}
\text{\Cf\ at the }k\text{-th stage} & \Longrightarrow & \text{\pu\ at the }k\text{-th stage}\\[3pt]
\Downarrow &  & \Downarrow\\[3pt]
\text{\fu\ at the }\left(2n-k\right)\text{-th stage} & \Longrightarrow &  \text{\Cp\ at the }\left(2n-k\right)\text{-th stage}\,.
\end{array}
$$
\end{prop}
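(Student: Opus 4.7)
The plan is to establish all four implications using two ingredients on the compact manifold $X$: the inclusion of smooth forms into currents, and the non-degenerate pairing between cohomology and de Rham homology.

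The first ingredient is the inclusion $\iota : \Lambda^m(X) \hookrightarrow \correnti_{2n-m}(X)$, $\iota(\beta)(\alpha) := \int_X \beta \w \alpha$, which commutes with $d$ and, by de Rham's theorem, induces an isomorphism $H^m_{dR}(X;\R) \cong H_{2n-m}(X;\R)$. Its compatibility with the $J$-bigrading is that a form of bidegree $(p,q)$ is sent to a current of bidimension $(n-p,n-q)$, since $\int_X \beta \w \alpha$ is non-zero only for $\alpha$ of bidegree $(n-p,n-q)$. The two vertical implications then follow by transporting decompositions along $\iota$: assuming \Cf\ at stage $k$, applying $\iota$ to a bidegree-representative decomposition of any class of $H^k$ gives a bidimension-representative decomposition of the corresponding class of $H_{2n-k}$, yielding \fu\ at stage $2n-k$; for the other vertical implication, a closed form of pure bidegree at stage $2n-k$ is in particular a closed current of pure bidimension at stage $k$, so each form subspace $H^{(p,q),(q,p)}_{J} \subset H^{2n-k}$ embeds into the corresponding current subspace $H^J_{(p,q),(q,p)} \subset H_k$, and a trivial pairwise intersection of the latter forces a trivial pairwise intersection of the former.

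The two horizontal implications rely on the pairing $H^k \otimes H_k \to \R$, $([\alpha],\tau) \mapsto T(\alpha)$, together with the orthogonality that a current of pure bidimension $(p',q')$ annihilates every form of pure bidegree $(p,q)\ne(p',q')$, which is immediate from the definition of bidimension. Assuming \Cf\ at stage $k$, let $\tau$ lie in $H^J_{S_1}\cap H^J_{S_2}$ for two disjoint bidimension type sets $S_1,S_2$, with representatives $T_1,T_2$ of pure bidimensions in $S_1$ and $S_2$ respectively. For any $[\alpha]\in H^k$ write $[\alpha]=\sum_i [\alpha_i]$ with each $\alpha_i$ closed of a single bidegree $(p_i,q_i)$, using the hypothesis. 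Since $S_1\cap S_2=\emptyset$, at least one of $T_1,T_2$ has its bidimension disjoint from $(p_i,q_i)$, so pairs trivially with $\alpha_i$; as $T_1,T_2$ represent the same class and $\alpha_i$ is closed, the pairing $\langle\tau,[\alpha_i]\rangle$ is independent of the representative and equals zero. Summing gives $\langle\tau,[\alpha]\rangle=0$ for every $[\alpha]$, and non-degeneracy forces $\tau=0$. The remaining horizontal implication (\fu\ at stage $2n-k$ $\Rightarrow$ \Cp\ at stage $2n-k$) is proved by the identical argument with the roles of forms and currents interchanged.

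The main subtlety is purely bookkeeping: keeping track of the Poincar\'e shift $(p,q) \leftrightarrow (n-p,n-q)$ between stages $k$ and $2n-k$, and noting that smoothness of a representative is preserved by $\iota$ in one direction (forms to currents) but not in the other, which is what prevents the vertical implications from being equivalences. Integrability of $J$ plays no role.
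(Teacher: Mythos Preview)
The paper does not prove this proposition at all; it is quoted from \cite{li-zhang} and \cite{angella-tomassini} and immediately followed by a change of notation, with no argument given. So there is nothing in the present paper to compare against.

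Your argument is correct and is essentially the one given in the cited references. The two vertical arrows come from the chain map $\iota:\Lambda^\bullet(X)\to\correnti_{2n-\bullet}(X)$, which is a quasi-isomorphism on a compact $X$ and sends a form of bidegree $(p,q)$ to a current of bidimension $(n-p,n-q)$; the two horizontal arrows come from the non-degenerate duality pairing $H^k\times H_k\to\R$ together with the orthogonality of distinct bigrading types, exactly as you describe. Your treatment of the horizontal arrows (choosing, for each pure summand $\alpha_i$, whichever of the two representing currents $T_1,T_2$ annihilates that bidegree, and then invoking non-degeneracy) is precisely the mechanism used in \cite{angella-tomassini}.

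One small point of presentation: in your second paragraph you write that $H^{(p,q),(q,p)}_J\subset H^{2n-k}$ embeds into $H^J_{(p,q),(q,p)}\subset H_k$, but of course the target is $H^J_{(n-p,n-q),(n-q,n-p)}$; you acknowledge this shift in your final paragraph, so the argument is not affected, but it would read more cleanly if the correct indices appeared the first time. Also, the pairing you write as $([\alpha],\tau)\mapsto T(\alpha)$ should read $\tau(\alpha)$ (or choose a representative $T$ of $\tau$). These are cosmetic; the mathematics is sound.
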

In the sequel we will use the following notation:
\begin{eqnarray*}
{\mathcal Z}^-_J&=&{\mathcal Z}^{(2,0),(0,2)}_J\\
{\mathcal Z}^+_J&=& {\mathcal Z}^{(1,1)}_J\\
{\mathcal B}^-_J&=& {\mathcal B}^{(2,0),(0,2)}_J\\
{\mathcal B}^+_J&=& {\mathcal B}^{(1,1)}_J\\
H^-_J(X)&=& H^{(2,0),(0,2)}_J (X)_{\R}\\
H^+_J(X)&=& H^{(1,1)}_J (X)_{\R}\\
H_-^J(X)&=& H_{(2,0),(0,2)}^J (X)_{\R}\\
H_+^J(X)&=& H_{(1,1)}^J (X)_{\R}\,.
\end{eqnarray*}
\begin{rem}
Let $SU(2)\times\T^3$ be the product of the special unitary group with the torus $\T^3=\R^3\slash\Z^3$.
Let $\{e^1,\ldots ,e^6\}$ be the global parallelism on $SU(2)\times\T^3$ satisfying the Maurer-Cartan equations
$$
\left\{
\begin{array}{l}
de^1=e^2\wedge e^3,  \\[5pt]
de^2=-e^1\wedge e^3,  \\[5pt]
de^3=e^1\wedge e^2,\\[5pt]
de^4=de^5=de^6=0.
\end{array}
\right.
$$
Define the almost complex structure $J$ on $SU(2)\times\T^3$ by giving the following complex $(1,0)$-forms
$$
\varphi^1= e^1+ie^4,\quad\varphi^2= e^2+ie^5,\quad\varphi^3= e^3+ie^6;
$$
then (see \cite[example 5.4]{angella-tomassini-1}) we have that $J$ is a \Cf\  non \Cp\  almost complex structure. Hence, accordingly to Proposition \ref{thm:implicazioni}, $J$ is a pure non full almost complex structure on $SU(2)\times\T^3$.

\end{rem}

\section{Compact manifolds} \label{compact}
\subsection{$J$-Invariant cohomology classes}
For general $2$-cohomology classes on arbitrary manifolds, we can show
\begin{prop}\label{non-anti-invariant}
Let $X$ be a $2n$-dimensional manifold. Let $J$ be an almost complex structure on $X$ and $[\alpha]\in H^2(X;\R)$ such that $[\alpha]^n\neq 0$. Then
\begin{enumerate}
\item[i)] $[\alpha]\notin H^-_J(X)_\R\cap H^{+}_J(X)_\R$.\newline
\item[ii)] Moreover, if $n$ odd, then $[\alpha]\notin H^{-}_J(X)$.
\end{enumerate}
\end{prop}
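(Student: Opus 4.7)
The plan is to exploit a purely algebraic bidegree obstruction: a form of type $(p,q)$ must vanish whenever $p>n$ or $q>n$, since $(T^{1,0}X)^{*}$ and $(T^{0,1}X)^{*}$ each have complex rank $n$. In both parts of the proposition I will exhibit representatives whose components sit outside this allowed window, forcing $[\alpha]^{n}=0$ and contradicting the hypothesis. No compactness, metric, or Hodge theory is needed.

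For part (i), suppose $[\alpha]\in H^{+}_{J}(X)\cap H^{-}_{J}(X)$. Then $[\alpha]$ admits a closed $J$-invariant representative $\alpha^{+}\in\Lambda^{(1,1)}_{J}(X)_{\R}$ and also a closed $J$-anti-invariant representative $\alpha^{-}\in\Lambda^{(2,0),(0,2)}_{J}(X)_{\R}$. The cup product on de Rham cohomology is induced by the wedge product, so I may mix representatives: writing $\alpha^{-}=\alpha^{+}+d\beta$ and using that $(\alpha^{+})^{n-1}$ is closed gives
$$[\alpha]^{n}\;=\;\bigl[(\alpha^{+})^{n-1}\w\alpha^{-}\bigr].$$
Decomposing $\alpha^{-}=\alpha^{2,0}+\alpha^{0,2}$, the products $(\alpha^{+})^{n-1}\w\alpha^{2,0}$ and $(\alpha^{+})^{n-1}\w\alpha^{0,2}$ have bidegrees $(n+1,n-1)$ and $(n-1,n+1)$ respectively, so both vanish identically as forms. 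Hence $[\alpha]^{n}=0$, contrary to the assumption.

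For part (ii), I show directly that $\alpha^{n}\equiv 0$ as a form whenever $\alpha\in\Lambda^{(2,0),(0,2)}_{J}(X)_{\R}$ and $n$ is odd. Writing $\alpha=\alpha^{2,0}+\alpha^{0,2}$, these are $2$-forms and hence commute under $\w$, so the binomial expansion gives
$$\alpha^{n}\;=\;\sum_{k=0}^{n}\binom{n}{k}(\alpha^{2,0})^{k}\w(\alpha^{0,2})^{n-k}.$$
The summand has bidegree $(2k,2(n-k))$; to be nonzero it must satisfy $2k\le n$ and $2(n-k)\le n$, forcing $k=n/2$. Since $n$ is odd no such $k$ exists, so every term vanishes and $\alpha^{n}\equiv 0$. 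Thus $[\alpha]^{n}=0$, and the contrapositive yields $[\alpha]\notin H^{-}_{J}(X)$ whenever $[\alpha]^{n}\ne 0$.

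I do not anticipate any real obstacle: the only subtlety is the move in (i) of representing a single class by \emph{different} closed forms inside one wedge product, which is what lets the bidegree of $(\alpha^{+})^{n-1}\w\alpha^{-}$ exceed the allowed $(n,n)$ window. Everything else is bookkeeping on bidegrees.
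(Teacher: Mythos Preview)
Your proof is correct and follows the same strategy as the paper's: both show that the relevant $2n$-form $(\alpha^{+})^{n-1}\wedge\alpha^{-}$ (respectively $\varphi^{n}$ for part~(ii)) vanishes identically, forcing $[\alpha]^{n}=0$. The only difference is packaging---you use bidegree counting (components of type $(p,q)$ with $p>n$ or $q>n$ must vanish), whereas the paper argues via a basis that the form is simultaneously $J$-invariant (as every $2n$-form is) and $J$-anti-invariant, hence zero; these are equivalent encodings of the same pointwise linear-algebraic obstruction.
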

\begin{proof}
i) By contradiction. Let $[\alpha]\in H^{-}_J(X)\cap H^{+}_J(X)$; then $[\alpha]=[\alpha_1]=[\alpha_2]$, where $\alpha_1$ is $J$-anti-invariant and $\alpha_2$ is $J$-invariant.
Then $\alpha_1\wedge\alpha_2^{n-1}$ is a $J$-anti-invariant $2n$-form which does not vanish identically. Therefore, $J(\alpha_1\wedge\alpha_2^{n-1})=-\alpha_1\wedge\alpha_2^{n-1}$. Take
$\{e_1,\ldots, e_n,Je_1,\ldots ,Je_n\}$ a basis of $T_pX$. Then
$$
\begin{array}{lll}
J(\alpha_1\wedge\alpha_2^{n-1})(e_1,\ldots,e_n,Je_1,\ldots,Je_n)&=&\alpha_1\wedge\alpha_2^{n-1}(Je_1,\ldots , Je_n, -e_1,\ldots ,-e_n)\\
&=&\alpha_1\wedge\alpha_2^{n-1}(e_1,\ldots ,e_n,Je_1,\ldots ,Je_n)\,,
\end{array}
$$
i.e., $J(\alpha_1\wedge\alpha_2^{n-1}) = \alpha_1\wedge\alpha_2^{n-1}$. This is absurd.

ii) We observe that $\alpha^n\neq 0$ at some point (otherwise,
$[\alpha]^n = 0$). Assume by contradiction that $[\alpha]\in H^{-}_J(X)$. Then there exists
$\varphi\in \Lambda^{-}_J(X)$ such that $[\alpha]=[\varphi]$; furthermore, there exists a point $p\in X$ such that $\varphi^n(p)\neq 0$. Since
$\varphi\in\Lambda^{-}_J(X)$ and $n$ is odd, we have that $J\varphi^n=-\varphi^n$.
Arguing as in the proof of i), we obtain that $\alpha^n=0$. This is absurd.
\end{proof}
As a direct consequence of the last Proposition, in the compact case we have the following
\begin{cor}
Let $(X,\omega)$ be a $2n$-dimensional compact symplectic manifold, with $n$ odd. Let $J$ be an almost complex structure on $M$. Then $[\omega]\notin
H^{-}_J(X)$.
\end{cor}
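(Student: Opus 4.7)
The plan is to apply part~ii) of Proposition~\ref{non-anti-invariant} directly, with $\alpha = \omega$. To do this I only need to verify the hypothesis $[\omega]^n \neq 0$ in $H^{2n}(X;\R)$; once that is in hand, Proposition~\ref{non-anti-invariant}~ii) immediately gives $[\omega] \notin H^{-}_J(X)$ since $n$ is odd, which is exactly the conclusion of the corollary.

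The hypothesis $[\omega]^n \neq 0$ is standard and comes from compactness together with the symplectic condition. First I would recall that since $\omega$ is symplectic, the top power $\omega^n$ is a nowhere vanishing $2n$-form on $X$, i.e.\ a volume form. By compactness of $X$ (and choosing an orientation compatible with $\omega^n$), the integral $\int_X \omega^n$ is a nonzero real number. Since integration over $X$ descends to a well-defined linear functional on $H^{2n}(X;\R)$, this forces the class $[\omega^n] = [\omega]^n \in H^{2n}(X;\R)$ to be nonzero.

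With $[\omega]^n \neq 0$ established, Proposition~\ref{non-anti-invariant}~ii) applies verbatim: if we had $[\omega] \in H^{-}_J(X)$, then there would exist a $J$-anti-invariant representative $\varphi$ with $[\omega] = [\varphi]$, and the pointwise argument of the proposition (using $J\varphi^n = -\varphi^n$ because $n$ is odd) would contradict $[\omega]^n \neq 0$. Hence $[\omega]\notin H^{-}_J(X)$, as claimed.

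There is essentially no real obstacle here; the only thing to make explicit is the passage from ``$\omega$ is symplectic on a compact manifold'' to ``$[\omega]^n \neq 0$'', which is a one-line integration argument. The whole corollary is a direct packaging of Proposition~\ref{non-anti-invariant}~ii) in the symplectic setting.
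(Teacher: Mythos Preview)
Your proposal is correct and matches the paper's approach exactly: the paper presents this corollary explicitly as ``a direct consequence of the last Proposition'' and gives no further argument, so the only content is verifying $[\omega]^n\neq 0$ via compactness and the symplectic condition, which you have done.
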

The following provides an example of a $2$-cohomology class $\mathfrak{a}$ on a $6$-dimensional compact nilmanifold $N$ endowed with a left-invariant almost complex structure $J$, which belongs to
$H^{+}_J (N) \cap H^{-}_J (N)$ and whose square is non-zero; nevertheless, $\mathfrak{a}^3=0$, according to Proposition \ref{non-anti-invariant}.
\subsection{Example}\label{nonnull-square}  Let $\mathfrak{g}$ be the $6$-dimensional nilpotent Lie algebra whose dual vector space admits a basis $\{e^1,\ldots ,e^6\}$ satisfying the following Maurer-Cartan equations:
$$
de^1=0\,,\qquad de^2=0\,,\qquad de^3=e^{12}\,,\qquad de^4=e^{13}\,,\qquad de^5=e^{14}\,,\qquad de^6=e^{23}\,,
$$
where $e^{ij}=e^i\wedge e^j$ and so on.
Let $G$ be the connected and simply-connected Lie group having $\mathfrak{g}$ as Lie algebra. Then, $G$ admits a uniform discrete subgroup $\Gamma$ and $N=\Gamma\backslash G$ is a $6$-dimensional nilmanifold. Let $J$ be the almost complex structure on $N$ defined by the following global $(1,0)$-forms
$$
\psi^1= e^1+ie^2\,,\quad \psi^2= e^4+ie^6\,,\quad \psi^3= e^3+ie^5\,.
$$
By Nomizu Theorem, it follows that $H^\bullet(\mathfrak{g})\simeq H^\bullet (N;\R)$ and consequently, $b_1(N)=2$, $b_2(N)=4$ and $b_3(N)=6$. By a straightforward computation it follows that
$0\neq [e^{26}]\in H^2(N;\R)$ and $0\neq [e^{15}]\in H^2(N;\R)$. Furthermore,
$$
[e^{26}]=[e^{26}-e^{14}]=[-\Re\mathfrak{e}\,(\psi^1\wedge\psi^2)]\,,\qquad [e^{26}]=[e^{26}+e^{14}]=[\Re\mathfrak{e}\,(\psi^1\wedge\overline{\psi}^2)]
$$
and
$$
[e^{15}]=[e^{15}+e^{23}]=[\Im\mathfrak{m}\,(\psi^1\wedge\psi^3)]\,,\qquad [e^{15}]=[e^{15}-e^{23}]=[\Im\mathfrak{m}\,(\overline{\psi}^1\wedge\psi^3)]
$$
Therefore, $0\neq [e^{26}]\in H^+_J(N)\cap H^-_J(N)$ and $0\neq[e^{15}]\in H^+_J(N)\cap H^-_J(N)$. Set $\mathfrak{a}=[e^{26}-e^{15}]\in H^+_J(N)\cap H^-_J(N)$ and
$\mathfrak{a}^2 =2[e^{1256}]\neq 0$.

\section{Non compact manifolds}
In this section we consider open manifolds (without boundary), namely manifolds without compact connected components. Starting with a symplectic $4$-manifold, we have
\begin{theorem}\label{anti-invariant-symplectically-trivial}
Let $(X,\omega)$ be an $4$-dimensional open symplectic manifold. Then there exists an almost complex structure $J$ on $X$ such that $[\omega]\in H^-_J(X)$ if and only if the tangent bundle $TX$ is symplectically trivial.
\end{theorem}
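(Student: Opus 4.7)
The plan is to prove the two implications separately. For the ``if'' direction, assume $TX$ is symplectically trivial and fix a symplectic bundle isomorphism $\phi\colon(TX,\omega)\to X\times(\R^4,\omega_0)$ with $\omega_0 = dx_1\wedge dy_1+dx_2\wedge dy_2$. On $(\R^4,\omega_0)$ define the linear endomorphism $J_0$ by $J_0\partial_{x_1}=\partial_{x_2}$, $J_0\partial_{x_2}=-\partial_{x_1}$, $J_0\partial_{y_1}=-\partial_{y_2}$, $J_0\partial_{y_2}=\partial_{y_1}$; a direct check shows $J_0^2=-\mathrm{id}$ and $J_0^\ast\omega_0=-\omega_0$. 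Pulling $J_0$ back via $\phi$ produces an almost complex structure $J$ on $X$ for which $\omega$ itself is $J$-anti-invariant, so $[\omega]\in H^-_J(X)$.

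For the ``only if'' direction, suppose $J$ is an almost complex structure with $[\omega]\in H^-_J(X)$; write $\omega=\omega'+d\alpha$ with $\omega'=\Re\Psi\in\mathcal{Z}^-_J$, where $\Psi$ is a section of the canonical bundle $K_J:=\Lambda^{(2,0)}_J$. A pointwise calculation in a $J$-holomorphic frame gives $(\omega')^2=2|\Psi|^2\,\mathrm{vol}_J$, so a nowhere-vanishing $J$-anti-invariant $2$-form is automatically symplectic. Since $X$ is open, $H^4(X;\Z)=0$ and hence $c_2(TX)=e(TX)=0$; combined with the homotopy equivalence $Sp(4,\R)\simeq U(2)$, the symplectic triviality of $(TX,\omega)$ is equivalent to $c_1(TX,J_{\mathrm{comp}})=0$ for any $\omega$-compatible $J_{\mathrm{comp}}$.

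To extract this vanishing I would construct an almost complex structure $\widetilde J$ on $X$ making $\omega$ itself anti-invariant, i.e.\ $\widetilde J^\ast\omega=-\omega$ pointwise. The pointwise stabilizer in $Sp(4,\R)$ of such a structure is $SL(2,\C)$ (the symplectic maps that are $\widetilde J$-linear and preserve the associated $(2,0)$-form), and the identifications $Sp(4,\R)/SL(2,\C)\simeq U(2)/SU(2)\simeq S^1$ show that the obstruction to the existence of $\widetilde J$ lies in $H^2(X;\Z)$ and coincides with $c_1(TX,J_{\mathrm{comp}})$; hence the existence of $\widetilde J$ gives the desired vanishing. To build $\widetilde J$ from $(\omega',J)$, the plan is first to modify $\omega'$ by an exact anti-invariant form, using the openness of $X$ to kill the zero set of $\Psi$ and obtain a nowhere-vanishing, hence symplectic, $\omega''\in[\omega]\cap\mathcal{Z}^-_J$; then a Moser-type isotopy on the open manifold between the cohomologous symplectic forms $\omega$ and $\omega''$ transports $J$ to a $\widetilde J$ for which $\omega$ itself is anti-invariant.

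The main obstacle is precisely the zero-removal step: the vanishing locus of $\Psi$ is Poincar\'e dual to $c_1(K_J)=-c_1(TX,J)\in H^2(X;\Z)$, exactly the class we want to vanish, so the argument must exploit both the openness of $X$ (permitting a retract to a $3$-complex, where rank-two complex bundles are governed by $c_1$ alone) and the specific structure of exact anti-invariant $2$-forms on an almost complex $4$-manifold to deform $\omega'$ within $[\omega]\cap\mathcal{Z}^-_J$ and eliminate zeros. This is where the dimension-four, open-manifold hypothesis is essential, and is the flexibility that fails in higher dimensions, as shown by Proposition~\ref{pontrjagin}.
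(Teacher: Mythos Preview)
Your ``if'' direction is fine and essentially matches the paper's argument: a symplectic trivialization lets you pull back a model anti-invariant $J_0$ from $(\R^4,\omega_0)$.

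Your ``only if'' direction, however, has a genuine gap and is far more complicated than necessary. You yourself identify the problem: the zero-removal step is circular. The zeros of your section $\Psi$ of $K_J$ are dual to $c_1(K_J)$, which is precisely the class you are trying to prove vanishes; you cannot appeal to its vanishing in order to remove the zeros. Moreover, invoking a Moser isotopy between $\omega$ and $\omega''$ on an \emph{open} manifold is delicate: the time-dependent vector field produced by Moser's trick need not be complete, so you do not get a global diffeomorphism without additional control. So the proposal, as written, does not close.

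The paper's proof is much more direct and avoids all of this. It treats $\omega$ itself as $J$-anti-invariant (this is the intended reading of the theorem; see the surrounding discussion of ``anti-invariant pairs $(\omega,J)$''). Then one simply chooses a nowhere-vanishing vector field $V_1$ (this is where openness is used), picks an $\omega$-compatible almost complex structure $K$, and observes that anti-invariance forces $\omega(V_1,JV_1)=\omega(KV_1,JKV_1)=0$ while $\omega(V_1,KV_1)>0$ and $\omega(JV_1,JKV_1)<0$. The four vectors $V_1,\,JV_1,\,KV_1,\,JKV_1$ then yield, after the standard symplectic Gram--Schmidt process, a global symplectic frame for $(TX,\omega)$. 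No characteristic-class argument, no obstruction theory, no Moser isotopy is needed. If you keep the cohomological reading $[\omega]\in H^-_J(X)$, note that any nonzero $J$-anti-invariant $2$-form on a $4$-manifold is automatically nondegenerate at that point (your own computation $(\omega')^2=2|\Psi|^2\,\mathrm{vol}_J$ shows this), so the only issue is the zero locus --- and the paper sidesteps that by working with the anti-invariant symplectic form directly.
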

\begin{proof} $(\Leftarrow)$ Assume that $\{V_1,V_2,W_1,W_2\}$ is a global symplectic frame on $X$, namely
$$\omega(V_i,W_j)=\delta_{ij}\,,\quad\omega(V_i,V_j)=
\omega(W_i,W_j)=0\,,\quad i,j=1,\ldots, 4\,.
$$
Define
$$
JV_1=V_2\,,\quad JW_1=-W_2\,,\quad \omega =V^1\wedge W^1 + V^2\wedge W^2 \,,
$$
where $\{V^1,V^2,W^1,W^2\}$ is the dual frame of $\{V_1,V_2,W_1,W_2\}$. Then it is immediate to check that $J\omega=-\omega$, that is $[\omega]\in H^-_J(X)$.
\newline
$(\Rightarrow)$ Assume that there exists an almost complex structure $J$ such that $[\omega]\in H^-_J(X)$. As $X$ is open we can choose a never vanishing vector field
$V_1$ on $X$ and let $K$ be an $\omega$-calibrated almost complex structure on $X$. Since $\omega$ is anti-invariant, we have that
$$
\omega (V_1,JV_1)=0\,,\quad \omega (KV_1,JKV_1)=0\,;
$$
furthermore, $\omega(V_1,KV_1)>0$, $K$ being $\omega$-calibrated, and, consequently, $\omega(JV_1,JKV_1)<0$.
Then, by using the standard symplectic Gram-Schmidt process, we can extract a global symplectic basis from $\{V_1,JV_1,KV_1,JKV_1\}$.
\end{proof}
\begin{rem}\label{remark2}
We observe that the proof of $(\Leftarrow)$ holds for every $2n$-dimensional symplectic manifold with symplectically trivial tangent bundle, provided that $n$ is even.
\end{rem}
The next result shows that Theorem \ref{anti-invariant-symplectically-trivial} holds only in the four dimensional case. Namely,
\begin{prop}\label{pontrjagin}
Let $(X,J,g)$ be a compact almost Hermitian manifold of real dimension $4$ such that the Pontrjagin class $p_1(TX)\neq 0$ and $h^-_J(X)>0$.
Then the open almost complex $(4+4k)$-dimensional manifold $X\times \C^{2k}$ has a symplectic anti-invariant class such that $T(X\times \C^{4k})$ is nontrivial.
\end{prop}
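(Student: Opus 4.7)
The plan is to construct the symplectic form on the open manifold $X\times\C^{2k}$ using Gromov's h-principle for symplectic forms on open manifolds, with the prescribed anti-invariant cohomology class pulled back from $X$.

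First, since $h^-_J(X)>0$, I pick a closed $J$-anti-invariant $2$-form $\alpha$ on $X$ representing a nonzero class $[\alpha]\in H^-_J(X)_{\R}$. Equip the product $X\times\C^{2k}$ with the product almost complex structure $\tilde J:=J\times J_0$, where $J_0$ is the standard structure on $\C^{2k}$. Since the type decomposition of forms is natural with respect to the projection $\pi_1\colon X\times\C^{2k}\to X$, the pullback $\pi_1^*\alpha$ is a closed $\tilde J$-anti-invariant $2$-form. Moreover, as $\C^{2k}$ is contractible, $\pi_1$ is a homotopy equivalence and $\pi_1^*\colon H^\bullet(X;\R)\to H^\bullet(X\times\C^{2k};\R)$ is an isomorphism; thus $[\pi_1^*\alpha]\in H^-_{\tilde J}(X\times\C^{2k})_{\R}$ is nonzero.

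Next, I invoke Gromov's h-principle. The manifold $X\times\C^{2k}$ is open (since the $\C^{2k}$ factor has no compact component), and it admits the non-degenerate $2$-form given by the fundamental form of the product almost Hermitian structure $(\tilde J,g\oplus g_0)$. Gromov's theorem asserts that, on an open manifold carrying a non-degenerate $2$-form, every de Rham class is represented by a symplectic form. Applied to the class $[\pi_1^*\alpha]$, this yields a symplectic form $\tilde\omega$ with $[\tilde\omega]=[\pi_1^*\alpha]\in H^-_{\tilde J}(X\times\C^{2k})_{\R}$, which is the sought symplectic anti-invariant class.

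Finally, for the nontriviality of $T(X\times\C^{2k})$: the splitting $T(X\times\C^{2k})\cong\pi_1^*TX\oplus\pi_2^*T\C^{2k}$ together with the triviality of $T\C^{2k}$ yields, by naturality of Pontrjagin classes, $p_1\bigl(T(X\times\C^{2k})\bigr)=\pi_1^*p_1(TX)$. Since $p_1(TX)\neq 0$ and $\pi_1^*$ is an isomorphism on $H^4$, the first Pontrjagin class of $T(X\times\C^{2k})$ is nonzero, and hence the tangent bundle cannot be trivial. The main obstacle in the argument is the correct invocation of Gromov's h-principle in the class-prescribing version: one must use that on an open manifold any non-degenerate $2$-form can be deformed through non-degenerate $2$-forms to a symplectic form representing any chosen de Rham class. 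The remaining ingredients---compatibility of pullback with bidegrees, the homotopy equivalence $X\times\C^{2k}\simeq X$, and the Pontrjagin-class computation---are essentially formal.
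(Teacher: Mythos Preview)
Your argument is correct, but it takes a different route from the paper. The paper does not invoke the $h$-principle at all: it simply writes down an explicit closed $\hat J$-anti-invariant $2$-form on $X\times\C^{2k}$, namely $\alpha+\omega$ where $\omega=\sum_{j=1}^k dz_j\wedge dw_j$ is the standard holomorphic symplectic form on $\C^{2k}$ (so $\Re\omega$ is a real $\hat J$-anti-invariant symplectic form on the $\C^{2k}$ factor). Since $\omega$ is exact on the contractible factor, $[\alpha+\omega]=[\pi_1^*\alpha]\neq 0$, and the Pontrjagin class computation is the same as yours. In the paper's intended examples (e.g.\ the Fermat quartic, where $\alpha$ is the real part of a nowhere-vanishing holomorphic $(2,0)$-form) the form $\alpha+\omega$ is itself symplectic, because $(\alpha+\omega)^{2+2k}$ is a nonzero multiple of $\alpha^2\wedge\omega^{2k}$; thus one obtains a single form that is simultaneously anti-invariant and symplectic, without any appeal to Gromov's theorem.

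The trade-off is this: the paper's construction is elementary and yields a representative that is both symplectic and anti-invariant, but it tacitly needs $\alpha$ to be nowhere vanishing on $X$ for $\alpha+\omega$ to be nondegenerate. Your $h$-principle argument works under the bare hypothesis $h^-_J(X)>0$ even if every closed anti-invariant form on $X$ has zeros, but the symplectic representative $\tilde\omega$ it produces is only cohomologous to an anti-invariant form, not anti-invariant itself. For the purpose of showing that Theorem~\ref{anti-invariant-symplectically-trivial} fails in higher dimensions, either reading of ``symplectic anti-invariant class'' suffices, so both approaches do the job.
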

\begin{proof}
By assumption, there exists $0\neq[\alpha]\in\mathcal{Z}^-_J$.
Let $(z_1,\ldots ,z_k,w_1,\ldots,w_k)$ be coordinates on $\C^{2k}$ and set $\omega=\sum_{j=1}^k dz_j\wedge dw_j$. Then $0\neq[\alpha + \omega]\in H^-_{\hat{J}}(X\times\C^{2k})$, where $\hat{J}$ is the product almost complex structure on $X\times\C^{2k}$. \newline
We have: $p_1(T(X\times \C^{2k})\big\vert_{X\times\{0\}})\in H^4(X)$ and $p_1(TX\oplus\C^{2k})=p_1(TX)\neq 0$ by assumption.
\end{proof}
\begin{ex} As an application of Proposition \ref{pontrjagin}, we take the {\em Fermat quartic}
$X=\{[z_0:z_1:z_2:z_3]\in\mathbb{P}^3(\C)\,\,\,\vert\,\,\, z_0^4+z_1^4+z_2^4+z_3^4=0\}$. It turns
out that the signature $\tau(X)=-16$ (see e.g. \cite[p.156]{joyce}). Then, according to the Hirzebruch signature Theorem, $p_1(TX)=3\tau(X)=-48$.
Furthermore, it is well known that $X$ has a nowhere vanishing holomorphic $(2,0)$-form $\Omega$. Hence, $[\Omega +\omega]\in H^2(X\times\C^{2k})$
is anti-invariant and $T(X\times\C^{2k})$ is non-trivial. More in general,  by \cite[Prop.3.21, Rem.3.22]{draghici-li-zhang1}, it follows
that on any K\"ahler surface with holomorphically trivial canonical bundle there exist (non-integrable) almost complex structures $J_{f,l,s}$,
depending on certain functions $f,l,s$, such that the dimension of $H^-_{J_{f,l,s}}(X)$ is $1$ or $2$. Therefore, we obtain
open almost complex $(4+4k)$-dimensional manifolds with an anti-invariant class and non-trivial tangent bundle.
\end{ex}
\begin{theorem} \label{opensymp}
Let $X$ be an open $2n$-dimensional manifold with trivial tangent bundle and $n$ even. Then for any cohomology class $\mathfrak{a}\in H^2(X)$ there exists an almost complex structure $J$ and an anti-invariant symplectic form $\omega$ on $X$ such that $\mathfrak{a}=[\omega]$.
\end{theorem}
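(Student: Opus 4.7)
The plan is to combine Gromov's h-principle for symplectic forms on open manifolds with the construction underlying Remark \ref{remark2}. The intermediate target is to produce a symplectic form $\omega$ on $X$ with $[\omega]=\mathfrak{a}$ such that $(TX,\omega)$ admits a global symplectic frame; once this is in hand, Remark \ref{remark2} (the block-wise version of the $(\Leftarrow)$ direction of Theorem \ref{anti-invariant-symplectically-trivial}, valid whenever $n$ is even) directly supplies an almost complex structure $J$ with $J^{*}\omega=-\omega$.

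First I would exploit the trivialization of $TX$ to manufacture a pointwise non-degenerate $2$-form. Fix a global frame $\{V_1,\ldots,V_n,W_1,\ldots,W_n\}$ with dual coframe $\{V^1,\ldots,V^n,W^1,\ldots,W^n\}$, and set
$$
\omega_0=\sum_{j=1}^n V^j\wedge W^j.
$$
This $\omega_0$ is everywhere non-degenerate, though in general not closed, since the trivialization need not be flat. By construction $(TX,\omega_0)$ admits a global symplectic frame.

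Next I would invoke Gromov's h-principle for symplectic forms on open manifolds: on an open manifold, given a pointwise non-degenerate $2$-form $\omega_0$ and any class $a\in H^2(X;\R)$, there exists a symplectic form in the class $a$ that is homotopic to $\omega_0$ through (not necessarily closed) non-degenerate $2$-forms. Applied to our $\omega_0$ and $\mathfrak{a}$, this produces a continuous path $\omega_t$, $t\in[0,1]$, of non-degenerate $2$-forms with $\omega_1=\omega$ symplectic and $[\omega]=\mathfrak{a}$. The principal $\mathrm{Sp}(2n,\R)$-bundles of symplectic frames of $(TX,\omega_t)$ form a continuously varying family over the fixed base $X$; since this bundle is trivial at $t=0$, homotopy invariance of principal bundles forces triviality at $t=1$. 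Hence $(TX,\omega)$ is symplectically trivial, and Remark \ref{remark2} produces the required $J$, yielding $\mathfrak{a}=[\omega]\in H^{-}_J(X)$.

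The main obstacle is the correct application of Gromov's h-principle: the version needed must realise the prescribed cohomology class $\mathfrak{a}$ by a symplectic form \emph{and} record the homotopy of non-degenerate forms from $\omega_0$ to $\omega$. That homotopy is what transfers the symplectic triviality of the tangent bundle from $\omega_0$ to $\omega$; without it one could not directly produce a global symplectic frame for $\omega$ itself, which is exactly the input that Remark \ref{remark2} requires.
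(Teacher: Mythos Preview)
Your proof is correct and follows essentially the same route as the paper: build a non-degenerate $2$-form $\omega_0$ from the global frame, apply Gromov's $h$-principle to obtain a homotopy of non-degenerate forms ending at a symplectic $\omega_1$ with $[\omega_1]=\mathfrak{a}$, transfer symplectic triviality along this homotopy, and then invoke Remark \ref{remark2}. The only cosmetic difference is that the paper phrases the triviality transfer via the homotopy lifting property for the bundle of ordered symplectic frames over $X\times[0,1]$, whereas you invoke homotopy invariance of principal $\mathrm{Sp}(2n,\R)$-bundles; these are equivalent formulations of the same step.
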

\begin{proof}
Let $\{V_1,\ldots ,V_n,W_1,\ldots ,W_n\}$ be a global frame on $X$, $J_0$ be the almost complex structure on $X$ defined as $J_0 V_i=W_i,\, i=1,\ldots ,n$, and $g$ be the Hermitian metric defined by requiring that $\{V_1,\ldots ,V_n,W_1,\ldots ,W_n\}$ is an orthonormal frame. Then the fundamental form $\omega_0$ of $g$ is an almost symplectic form with symplectically trivial tangent bundle. Denote by
\begin{eqnarray*}
\mathbb{S}^{\mathfrak{a}}_{{\rm symp}}&=&\{\hbox{\rm symplectic forms $\omega$ on {\em X} such that } \mathfrak{a}=[\omega]\}\\
\mathbb{S}_{{\rm symp}}&=&\{\hbox{\rm symplectic forms on {\em X}} \}\\
\mathcal{S}_{{\rm symp}}&=&\{\hbox{\rm almost symplectic forms on {\em X}} \}\,.
\end{eqnarray*}
Then $\mathbb{S}^{\mathfrak{a}}_{{\rm symp}}\subset \mathbb{S}_{{\rm symp}}\subset \mathcal{S}_{{\rm symp}}$ and according to the $h$-principle  (see \cite{gromov1} or \cite[10.2.2]{eliashberg-mishashev}) the inclusion $\mathbb{S}^{\mathfrak{a}}_{{\rm symp}}\hookrightarrow \mathcal{S}_{{\rm symp}}$ is an homotopy equivalence. In other words there exists a family $\omega_t$ of almost symplectic forms on $X$ with $\omega_1$ symplectic and $\mathfrak{a}=[\omega_1]$.

Denote by $E\to X\times [0,1]$ the bundle over $X\times [0,1]$ whose fibre is given by
$$
E_x=\{\hbox{\rm ordered symplectic basis of $T_xX$ with respect to $\omega_t$}\}.
$$
Since $(X,\omega_0)$ is symplectically trivial, then there exists a section of $E$ over $X\times\{0\}$. By the homotopy lifting property there is a section of $E$ over $X\times [0,1]$. In particular $(X,\omega_1)$ is symplectically trivial. Hence, in view of Remark \ref{remark2}, it follows that there exists an almost complex structure $J$ such that
$\omega_1$ is a $J$-anti-invariant form.
\end{proof}
Concerning invariant cohomology classes, we have
\begin{prop}\label{invariant-cohomology-symplectic}
Let $(X,J)$ be an open almost complex manifold. Then given any non-zero class $\mathfrak{a}\in H^{2}(X;\R)$, there exists an almost complex structure $\hat{J}$ on $X$ which is homotopic to $J$ and such that
$\mathfrak{a}\in H^+_{\hat{J}}(X)$ and $\mathfrak{a}\notin H^-_{\hat{J}}(X)$.
\end{prop}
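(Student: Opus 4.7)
The plan is to combine a surface-integration obstruction (to rule out $\mathfrak{a}\in H^-_{\hat J}$) with Gromov's h-principle on open manifolds (to arrange $\mathfrak{a}\in H^+_{\hat J}$). The key pointwise observation is this: if $\iota:\Sigma\hookrightarrow X$ is a smoothly embedded closed oriented surface and $\hat J$ preserves $T\Sigma$ along $\Sigma$, then every $\hat J$-anti-invariant $2$-form $\alpha\in\Lambda^-_{\hat J}(X)$ satisfies $\iota^*\alpha\equiv 0$. Indeed, on the real $2$-dimensional space $T_p\Sigma$ the restriction $\hat J|_{T_p\Sigma}$ is a complex structure, hence acts as the identity on the top-form space $\Lambda^2 T^*_p\Sigma$ (its real determinant equals $+1$); but the identity $\hat J^*\alpha=-\alpha$ on $X$ restricts to $\hat J^*(\iota^*\alpha)=-\iota^*\alpha$, forcing $\iota^*\alpha=0$.

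Since $\mathfrak{a}\neq 0$, de Rham duality together with Thom's representability theorem and a transversality argument (available since $\dim X\geq 4$; the case $\dim X=2$ being trivial as $\Lambda^-_J=0$) furnish a smoothly embedded closed oriented surface $\Sigma\subset X$ with $\langle\mathfrak{a},[\Sigma]\rangle\neq 0$. The main construction step is then to produce $\hat J$ homotopic to $J$ and a closed form $\omega_1\in\Lambda^+_{\hat J}(X)$ with $[\omega_1]=\mathfrak{a}$ and such that $T\Sigma$ is $\hat J$-invariant along $\Sigma$. To this end I would start from the fundamental form $\omega_J$ of a $J$-Hermitian metric (an almost symplectic form) and invoke a relative version of Gromov's h-principle for symplectic forms on the open manifold $X$, prescribing on a tubular neighborhood of $\Sigma$ a standard product symplectic model $p_\Sigma^*\omega_\Sigma+p_{\mathrm{fib}}^*\omega_{\mathrm{std}}$, with $\omega_\Sigma$ an area form on $\Sigma$ of total mass $\langle\mathfrak{a},[\Sigma]\rangle$. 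This produces a symplectic $\omega_1$ with $[\omega_1]=\mathfrak{a}$ for which $\Sigma$ is a symplectic submanifold. The ensuing homotopy from $\omega_J$ to $\omega_1$ through almost symplectic forms lifts (by contractibility of the fiber of compatible almost complex structures above a nondegenerate $2$-form) to a path of compatible a.c.s., ending at some $\hat J$ that is $\omega_1$-compatible, preserves $T\Sigma$ (thanks to the prescribed model near $\Sigma$), and is homotopic to $J$. In particular $\omega_1$ is $\hat J$-invariant, so $\mathfrak{a}\in H^+_{\hat J}(X)$.

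Finally, to prove $\mathfrak{a}\notin H^-_{\hat J}(X)$, suppose for contradiction that some closed $\alpha\in\Lambda^-_{\hat J}(X)$ has $[\alpha]=\mathfrak{a}$. By the opening observation $\iota^*\alpha\equiv 0$, so $\int_\Sigma\alpha=0$; but $\int_\Sigma\alpha=\int_\Sigma\omega_1=\langle\mathfrak{a},[\Sigma]\rangle\neq 0$ by Stokes, a contradiction. The main technical obstacle will be the relative h-principle step: simultaneously arranging that $\omega_1$ is symplectic in class $\mathfrak{a}$, that $\Sigma$ is $\omega_1$-symplectic, and that the resulting compatible $\hat J$ lies in the homotopy class of $J$. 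The required flexibility is provided by Gromov's theorem for symplectic forms on open manifolds together with contractibility of the spaces of compatible almost complex structures; once these are in place, the obstruction argument is immediate.
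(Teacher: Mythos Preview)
Both your approach and the paper's begin identically: invoke Gromov's $h$-principle on the open manifold $X$ to produce a symplectic form $\omega$ with $[\omega]=\mathfrak{a}$ together with an $\omega$-compatible $\hat J$ homotopic to $J$, which yields $\mathfrak{a}\in H^+_{\hat J}(X)$ at once. They diverge in the obstruction to $\mathfrak{a}\in H^-_{\hat J}(X)$. The paper simply observes that if $\alpha_-\in\mathcal Z^-_{\hat J}$ represents $\mathfrak a$ then $\alpha_-\wedge\omega^{n-1}=0$ pointwise (for type reasons: $(2,0)+(0,2)$ wedged with $(n{-}1,n{-}1)$ lands in bidegrees exceeding $(n,n)$), and asserts that this forces $\alpha_-\equiv 0$, hence $\mathfrak a=0$. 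Your route is more elaborate: you first manufacture an embedded closed surface $\Sigma$ with $\langle\mathfrak a,[\Sigma]\rangle\neq 0$, then use a \emph{relative} form of the $h$-principle to arrange that $\Sigma$ is $\omega$-symplectic and $T\Sigma$ is $\hat J$-invariant, and finally exploit that anti-invariant forms pull back to zero on complex curves to obtain $\int_\Sigma\alpha_-=0\neq\langle\mathfrak a,[\Sigma]\rangle$. The cost of your approach is genuine---the relative $h$-principle near $\Sigma$, Thom representability of $H_2$ on a possibly noncompact $X$, and matching the sign of $\int_\Sigma\omega$ with the $\hat J$-orientation all need care---but the payoff is a transparently valid contradiction. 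Indeed the paper's shortcut deserves scrutiny: \emph{every} $\hat J$-anti-invariant $2$-form is $\omega$-primitive, so the identity $\alpha_-\wedge\omega^{n-1}=0$ by itself carries no information about $\alpha_-$, and on an open manifold one cannot simply integrate against $\omega^{n-1}$ as in the compact almost-K\"ahler purity argument. Your surface pairing supplies exactly the compactly supported test cycle that the open setting requires.
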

\begin{proof}
Let $0\neq \mathfrak{a}\in H^{2}(X;\R)$; then by the $h$-principle as above, see the remark in \cite{eliashberg-mishashev} following Theorem $10.2.2$, there exists a symplectic form $\omega$ and a homotopy equivalence of almost complex structures $\{\mathcal{J}_t\}$, for $t\in [0,1]$ such that
\begin{itemize}
\item $J\in \{\mathcal{J}_t\}$;
\item $\mathfrak{a}=[\omega]$ and $\mathcal{J}_1$ is compatible with $\omega$.
\end{itemize}
Hence, $\mathfrak{a}\in H^+_{\mathcal{J}_1}(X)$. We claim that $\mathfrak{a}\notin H^-_{\mathcal{J}_1}(X)$. On the contrary, $\mathfrak{a}\in H^+_{\mathcal{J}_1}(X)\cap H^-_{\mathcal{J}_1}(X)$; then $\mathfrak{a}=[\omega]=[\alpha_-]$, for suitable, $\alpha_-\in\mathcal{Z}^-$. Hence, we have
$$
0=\alpha_-\wedge\omega^{n-1}\,;
$$
therefore, $\alpha_-$ vanishes identically, and, consequently, $\mathfrak{a}=0$. This is absurd.
\end{proof}

\section{Analytic continuation of anti-invariant forms} \label{continuation}

In this section we prove an analytic continuation result for anti-invariant closed forms on almost complex manifolds.

\begin{theorem}\label{analytic-continuation-6}
Let $(X,J)$ be a connected $2n$-dimensional almost complex manifold. Let $\alpha$ be a closed $J$-anti-invariant form on $X$ such that $\alpha=0$ on an open set $\mathcal{U}\subset X$. Then $\alpha =0$ on $X$.
\end{theorem}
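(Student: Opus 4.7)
The plan is to derive from $d\alpha = 0$ a first-order elliptic system for the $(2,0)$-part of $\alpha$ and then appeal to Aronszajn's strong unique continuation theorem. Write $\alpha = \beta + \bar\beta$ with $\beta \in \Lambda^{2,0}_J(X)$. In the almost complex setting the exterior differential splits by bidegree as $d = \mu + \partial + \bar\partial + \bar\mu$, where $\mu \colon \Lambda^{p,q}_J \to \Lambda^{p+2,q-1}_J$ and $\bar\mu \colon \Lambda^{p,q}_J \to \Lambda^{p-1,q+2}_J$ are tensorial (zero-order) operators determined by the Nijenhuis tensor of $J$. Extracting the $(3,0)$ and $(2,1)$ bidegree components of $d\alpha = 0$ yields
\begin{equation*}
\partial\beta = 0, \qquad \bar\partial\beta + \mu\bar\beta = 0,
\end{equation*}
the $(1,2)$ and $(0,3)$ components being the complex conjugates of these. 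Hence $\beta$ satisfies a first-order PDE system $P\beta = R(\beta,\bar\beta)$ whose principal part is
\begin{equation*}
P = (\partial,\bar\partial) \colon \Lambda^{2,0}_J(X) \to \Lambda^{3,0}_J(X) \oplus \Lambda^{2,1}_J(X),
\end{equation*}
and whose right-hand side $R$ is a smooth zero-order perturbation, algebraic in $\beta$ and $\bar\beta$.

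The heart of the argument is to verify that $P$ is overdetermined elliptic, i.e., that its principal symbol at every non-zero real covector is injective. At $\xi = \xi^{1,0}+\xi^{0,1}$ (both summands non-zero, since $\xi$ is real and non-zero), the symbol sends $\beta$ to $(\xi^{1,0}\wedge\beta,\; \xi^{0,1}\wedge\beta)$. Assume both wedges vanish: Cartan's lemma applied to $\xi^{1,0}\wedge\beta = 0$ writes $\beta = \xi^{1,0}\wedge\gamma$ for some $\gamma \in \Lambda^{1,0}_J$; substitution into the second relation gives $\xi^{0,1}\wedge\xi^{1,0}\wedge\gamma = 0$ in $\Lambda^{2,1}_J$. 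Since wedging with the non-zero $(0,1)$-covector $\xi^{0,1}$ is injective as a map $\Lambda^{2,0}_J \hookrightarrow \Lambda^{2,1}_J$, we deduce $\xi^{1,0}\wedge\gamma = 0$, so $\gamma$ is proportional to $\xi^{1,0}$ and $\beta = \xi^{1,0}\wedge\gamma = 0$. Thus $P$ has injective principal symbol.

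Ellipticity in hand, one obtains strong unique continuation by the classical Aronszajn theorem. Composing $P\beta = R(\beta,\bar\beta)$ with the formal adjoint $P^*$ produces a second-order elliptic equation $P^*P\beta = P^*R(\beta,\bar\beta)$ whose right-hand side is at most first-order in $\beta$, yielding locally an Aronszajn-type inequality $|P^*P\beta| \leq C(|\beta| + |\nabla\beta|)$. Aronszajn's strong unique continuation theorem (in the version for second-order elliptic operators on systems, e.g.\ Aronszajn--Krzywicki--Szarski) then forces $\beta \equiv 0$ on the connected manifold $X$ whenever $\beta$ vanishes on a non-empty open subset. Consequently $\alpha = \beta + \bar\beta \equiv 0$. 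I expect the main delicate point to be the PDE-theoretic step: packaging the system so that the appropriate Aronszajn-type theorem can be invoked cleanly. The bidegree derivation of the system and the symbol/ellipticity computation are, by contrast, routine multilinear-algebra checks.
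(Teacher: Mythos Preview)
Your approach is correct and genuinely different from the paper's. The paper argues locally along an embedded $J$-holomorphic disk $\D\subset X$: it rescales in the normal directions to linearize $J$ along $\D$, writes everything in explicit coordinates, and from closedness and anti-invariance extracts a two-dimensional Laplace-type inequality for the components $a_{1j},a_{2j}$ of $\alpha$ restricted to $\D$. Aronszajn is then applied on the disk, and a separate lemma (existence of $J$-holomorphic disks through pairs of nearby points) is used to propagate vanishing across $X$. Your argument, by contrast, is global and intrinsic: the bidegree decomposition of $d$ yields the first-order system $\partial\beta=0$, $\bar\partial\beta=-\mu\bar\beta$ on the whole manifold, ellipticity is checked by a clean symbol computation, and Aronszajn is invoked once. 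Your route avoids both the rescaling/linearization step and the auxiliary lemma on $J$-holomorphic disks, at the cost of relying on the system version of unique continuation rather than the scalar one.

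One refinement worth making at the final step. With $P=(\partial,\bar\partial)$ the operator $P^*P$ on $\Lambda^{2,0}_J$ has principal symbol $|\xi|^2\,\mathrm{Id}-\xi^{1,0}\wedge\iota_{\xi^{1,0}}$, which is elliptic but not scalar, so the Aronszajn--Krzywicki--Szarski hypothesis (diagonal scalar principal part) is not immediate. You can sidestep this entirely: already $\bar\partial$ alone is overdetermined elliptic on $\Lambda^{2,0}_J$ (the map $\beta\mapsto\xi^{0,1}\wedge\beta$ is injective for $\xi$ real and nonzero, since $(2,0)$-forms contain no $\bar\partial$-factors). The single equation $\bar\partial\beta=-\mu\bar\beta$ then gives $\bar\partial^*\bar\partial\beta=$ (first-order in $\beta,\bar\beta$), and $\bar\partial^*\bar\partial$ has scalar principal symbol $|\xi^{0,1}|^2\,\mathrm{Id}=\tfrac12|\xi|^2\,\mathrm{Id}$, so the diagonal-system Aronszajn theorem applies without further work. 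This also makes your parenthetical remark about the ``delicate PDE-theoretic step'' essentially routine.
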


The proof in the $4$-dimensional case is
a direct consequence of Hodge theory.

\begin{proof} ($4$ dimensional case)
We may assume that $X=\R^4$. Take a $J$-Hermitian metric $g$ on $\R^4$; let $*_{g}$ be the Hodge operator and $\omega (\cdot ,\cdot)=g(\cdot ,J\cdot)$
be the fundamental form of $g$. Denote as usual by $\Lambda^+_{g}$ the space of self-dual forms on $(\R^4,g)$. Then,
$$
\Lambda^+_{g} = \R\,\omega \oplus \Lambda^-_J\,.
$$
Hence, if $\alpha\in\mathcal{Z}^-$, then $d*\alpha = d\alpha=0$. Therefore, $\Delta\alpha=0$ and by assumption $\alpha =0$ on $\mathcal{U}$.
In view of the Unique Continuation Theorem by Aronszajn (see \cite{aronszajn} and \cite{kazdan} or \cite{deturk}) we obtain that $\alpha =0$ on $\R^4$.
\end{proof}

The general case is more subtle but eventually reduces again to Aronszajn's Theorem.

\begin{proof} (general case)

Let $\D$ be a $J$-holomorphic disk in $X$. We can identify a small neighborhood of $\D \subset X$ with the total space of a complex vector bundle $\D \times \C^{n-1} \to \D$ such that $J|_{\D \times \C^{n-1}}$ coincides with the standard product structure along $\D \times \{0\}$. Let $t :(z,w) \mapsto (z,tw)$ denote the scaling in the fibers.

We linearize $J$ on $\D \times \C^{n-1}$ by replacing $J$ by $$J_0 = \lim_{t \to 0} (\frac{1}{t})^* \circ J \circ t_*.$$ Then $J_0$ is an almost complex structure on $\D \times \C^{n-1}$ invariant under scaling and restricting to the standard complex structure on the fibres $\{x\} \times \C^{n-1}$.

We can also replace $\alpha$ by $\alpha_0$ defined by $$\alpha_0(U,V) = \lim_{t \to 0} (\frac{1}{t}) \alpha(tU,tV).$$

{\bf Claim.} $\alpha_0$ is a closed, smooth $J_0$-anti-invariant form on $\D \times \C^{n-1}$. It vanishes on the fibers $\{x\} \times \C^{n-1}$.

To justify the claim we compute the almost complex structures and anti-invariant forms explicitly in local coordinates.

Let $(x_1, \dots ,x_{2n})$ be local coordinates in a neighborhood of a point $0 \in \D \times \{0\}$ such that the scaling is given by $t(x_1, \dots ,x_{2n})=(x_1,x_2,tx_3, \dots ,tx_{2n})$. Denote the coordinate vector fields by $e_i = \frac{\partial}{\partial x_i}$. Then $t_* e_i=e_i$ for $i=1,2$ and $t_* e_i = te_i$ for $i>2$. We can write
$$
\alpha = \sum_{i<j} a_{ij} dx_i \wedge dx_j
$$
and
$$
Je_i = \sum_j b_{ij}e_j
$$
where the $a_{ij}$ and $b_{ij}$ are functions of $(x_1, \dots ,x_{2n})$. Now, as the zero section is holomorphic and the fibers of the normal bundle are holomorphic along the zero
section we may assume that $b_{1j}(0) = \delta_{2j}, b_{2j}(0)= - \delta_{1j}$ and $b_{i1}(0)=b_{i2}(0)=0$ for all $i>2$. Furthermore, we will also suppose that coordinates are chosen such
that the matrix $(b_{ij})_{i,j >2}$ is the standard complex structure at all points $(x_1,x_2,0 \dots ,0)$. Next, as anti-holomorphic forms necessarily vanish on complex planes we have that $a_{12}(0)=0$.

We compute at a fixed point $x$ in the fiber over $0$.
$$
J_t(e_1(x))= (\frac{1}{t})^* J(t_*e_1(x)) = (\frac{1}{t})^*J(e_1(tx))
$$
$$
=(\frac{1}{t})^* \sum_j b_{1j}(tx) e_j = b_{11}(tx)e_1 + b_{12}(tx)e_2 + \sum_{j>2} b_{1j}(tx)\frac{1}{t}e_j
$$
$$
\to e_2 + \sum_{j>2} (\nabla b_{1j}(0) \bullet x)e_j
$$
as $t \to 0$.

Similarly $J_t(e_2(x))$ is a smooth vector field converging to
$$
-e_1 + \sum_{j>2} (\nabla b_{2j}(0) \bullet x)e_j
$$
as $t \to 0$.

For $i>2$ we have
$$
J_t(e_i(x))= (\frac{1}{t})^* J(t_*e_i(x)) = (\frac{1}{t})^*J(tx)(te_i)
$$
$$
=(\frac{1}{t})^* \sum_j b_{ij}(tx) te_j = b_{i1}(tx)te_1 + b_{i2}(tx)te_2 + \sum_{j>2} b_{ij}(tx)e_j
$$
$$
\to \sum_{j>2} b_{ij}(0) e_j
$$
as $t \to 0$. Hence in these coordinates the fibers converge to complex vector spaces as $t \to 0$.

For the forms, we have
$$
\alpha_t(x)(e_1,e_2)= \frac{1}{t} \alpha(tx)(e_1,e_2) = \frac{1}{t} a_{12}(tx)
$$
$$
\to \nabla a_{12}(0) \bullet x
$$
as $t \to 0$.

For $i,j>2$ we have
$$
\alpha_t(x)(e_i,e_j)= \frac{1}{t} \alpha(tx)(te_i,te_j) = ta_{ij}(tx) \to 0
$$
as $t \to 0$.

Finally for $i>2$ we have
$$
\alpha_t(x)(e_1,e_i)= \frac{1}{t} \alpha(tx)(e_1,te_i) = a_{1i}(tx) \to a_{1i}(0)
$$ as $t \to 0$ and
$$
\alpha_t(x)(e_2,e_i)= \frac{1}{t} \alpha(tx)(e_2,te_i) = a_{2i}(tx) \to a_{2i}(0)
$$
as $t \to 0$. As the convergence here is smooth, $\alpha_0$ is closed and $J_0$-anti-invariant as claimed.
 
Now, by the computations above, to understand $\alpha_0$ we are only interested in the values of $a_{1i}$ and $a_{2i}$ for $i>2$ along the zero-section, and hence will think of these only as functions of $x_1$ and $x_2$. The function $a_{12}$ on the other hand we must still consider as a function of $x_1, \dots ,x_{2n}$, although to simplify notation it is enough to consider the case when it is linear when restricted to the fibres.

There are relations amongst the functions above which we now detail. First of all, as a limit of closed forms, $\alpha_0$ is closed. Calculating its $dx_1 \wedge dx_2 \wedge dx_i$ component for $i>2$ we get
\begin{equation} \label{new1}
\frac{\partial a_{12}}{\partial x_i} -  \frac{\partial a_{1i}}{\partial x_2} + \frac{\partial a_{2i}}{\partial x_1} =0.
\end{equation}

Now we consider the anti-invariance of $\alpha_0$ at points along the zero-section and obtain

\begin{equation} \label{new2}
a_{1, (2i-1)}=-a_{2,2i}, \qquad a_{1, 2i}=a_{2, (2i-1)}
\end{equation}
for $i>1$. For instance, to prove the first of these, we use the formula
$
\alpha_0(0)(e_1,e_{2i-1}) = -\alpha_0(0)(e_2,e_{2i}).
$

Next, using our formulas for $\alpha_0$, we compute
$$
0= \alpha_0(e_1,J_0 e_1) = \alpha_0(e_1,e_2 + \sum_{j,k>2} \frac{\partial b_{1j}}{\partial x_k} x_k e_j)
$$
$$
= \sum_{k>2} \frac{\partial a_{12}}{\partial x_k} x_k + \sum_{j,k>2} a_{1j} \frac{\partial b_{1j}}{\partial x_k} x_k.
$$

As this holds for all choices of $x_k$ we obtain

\begin{equation} \label{new3}
\frac{\partial a_{12}}{\partial x_k} = -\sum_{j>2} a_{1j} \frac{\partial b_{1j}}{\partial x_k}.
\end{equation}

Applying equations \eqref{new2}, \eqref{new1} and then \eqref{new3} we obtain

\begin{equation} \label{new4}
\frac{\partial a_{1,(2i-1)}}{\partial x_2} - \frac{\partial a_{1,2i}}{\partial x_1} = \frac{\partial a_{1,(2i-1)}}{\partial x_2} - \frac{\partial a_{2,(2i-1)}}{\partial x_1} \\
= \frac{\partial a_{12}}{\partial x_{2i-1}} = -\sum_{j>2} a_{1j} \frac{\partial b_{1j}}{\partial x_{2i-1}}
\end{equation}

for all $i>1$, and similarly

\begin{equation} \label{new5}
\frac{\partial a_{1,2i}}{\partial x_2} + \frac{\partial a_{1,(2i-1)}}{\partial x_1} =- \sum_{j>2} a_{1j} \frac{\partial b_{1j}}{\partial x_{2i}}.
\end{equation}

Differentiating $\eqref{new4}$ with respect to $x_2$ gives

\begin{equation} \label{new6}
\frac{\partial^2 a_{1,(2i-1)}}{\partial x_2^2} - \frac{\partial^2 a_{1,2i}}{\partial x_1 \partial x_2} = -\sum_{j>2} \left( \frac{\partial a_{1j}}{\partial x_2} \frac{\partial b_{1j}}{\partial x_{2i-1}} + a_{1j} \frac{\partial^2 b_{1j}}{\partial x_2 \partial x_{2i-1}} \right)
\end{equation}

and differentiating $\eqref{new5}$ with respect to $x_1$ gives

\begin{equation} \label{new7}
\frac{\partial^2 a_{1,2i}}{\partial x_1 \partial x_2} + \frac{\partial^2 a_{1,(2i-1)}}{\partial x_1^2} = -\sum_{j>2} \left( \frac{\partial a_{1j}}{\partial x_1} \frac{\partial b_{1j}}{\partial x_{2i}} +
a_{1j} \frac{\partial^2 b_{1j}}{\partial x_1 \partial x_{2i}} \right).
\end{equation}

Adding equations \eqref{new6} and \eqref{new7} gives a formula for $\Delta a_{1,(2i-1)}$ in terms of the $a_{1j}$ for $j>2$ and their first derivatives. In fact,
if we think of the $a_{1j}$, $a_{2k}$ as giving a function $a:\D \to \R^{4(n-1)}$, $(x_1,x_2) \mapsto (a_{13}, \dots ,a_{1,2n},a_{23}, \dots ,a_{2,2n})$, then the above calculations give a bound on the Laplacian $\Delta a$ in terms of bounds on the function and its first derivatives $\frac{\partial a}{\partial x_1}$ and $\frac{\partial a}{\partial x_2}$. Thus by Aronszajn's Theorem the function $a$ satisfies a unique continuation theorem, in particular if $a$ is identically zero near a point then it is zero everywhere. Hence if the function
$a$ vanishes near a point in $\D$ then it vanishes everywhere on $\D$. As $a_{12}$ is identically zero along the zero-section $\D$ (as anti-invariant forms vanish on holomorphic curves) we conclude that if $\alpha =0$
near a point of $\D$ then it is zero at all points of $\D$.

To conclude the proof we need the following lemma.

\begin{lemma} \label{GW} Fix a Riemannian metric on $X$ defining a distance function $d$ and let $K \subset X$ be compact. There exists an $\epsilon>0$ such that for any $x,y \in K$ with $d(x,y) < \epsilon$
there exists a holomorphic disk $\D$ in $X$ passing through $x$ and $y$.
\end{lemma}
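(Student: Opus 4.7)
The plan is to invoke the Nijenhuis--Woolf theorem on local existence of $J$-holomorphic curves tangent to prescribed complex lines, and then show that such curves through a fixed point sweep out a punctured neighborhood; compactness of $K$ then provides the required uniform radius.

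First, I would recall the Nijenhuis--Woolf theorem: for each $x \in X$ and each $J$-invariant $2$-plane $L \subset T_xX$, there is a germ of a $J$-holomorphic curve $C_{x,L}$ through $x$ tangent to $L$, unique as such and depending smoothly on $(x, L)$. Let $\mathbb{P}_J(T_xX)$ denote the complex projectivization, a copy of $\mathbb{P}^{n-1}(\C)$ of real dimension $2n-2$.

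Next, fix $x_0 \in X$ and form the total space
$$
E_{x_0} \;=\; \left\{(L, y) \,:\, L \in \mathbb{P}_J(T_{x_0}X),\ y \in C_{x_0, L}\right\},
$$
a smooth $2n$-real-dimensional manifold fibered over $\mathbb{P}_J(T_{x_0}X)$ with $2$-dimensional fibers. The evaluation map $\pi\colon E_{x_0} \to X$, $(L, y) \mapsto y$, crushes the section $\{(L, x_0)\}_{L}$ to the single point $x_0$, while off this section $\pi$ is a local diffeomorphism. Indeed, in local coordinates around $x_0$ in which $J(x_0)$ agrees with the standard complex structure $J_0$, $\pi$ matches, to first order, the ordinary blow-down $\mathrm{Bl}_{x_0}(\C^n) \to \C^n$ of the complex blow-up; by smooth dependence of the curves $C_{x_0, L}$ on $(x_0, L)$, the same qualitative behavior persists for the almost complex $J$, and $\pi$ restricts to a diffeomorphism from a neighborhood of the zero section minus the zero section itself onto a punctured neighborhood of $x_0$ in $X$. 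In particular, every $y$ sufficiently close to $x_0$ lies on some $C_{x_0, L}$.

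Finally, the uniformity over $K$ follows by compactness. Cover $K$ by finitely many open sets $U_i$ on each of which the construction above produces a radius $\epsilon_i > 0$ such that, uniformly in $x \in U_i$, any $y \in B_{\epsilon_i}(x)$ lies on $C_{x, L}$ for some $L \in \mathbb{P}_J(T_xX)$. Taking $\epsilon$ smaller than $\min_i \epsilon_i$ and smaller than the Lebesgue number of the cover yields the required constant: for any $x, y \in K$ with $d(x, y) < \epsilon$ the pair lies in a common $U_i$ and hence on a common $J$-holomorphic curve, which, restricted to a disk parametrization, gives the desired $\D$ through $x$ and $y$. The main obstacle is establishing the local surjectivity of $\pi$ near the zero section in the almost complex setting -- essentially a blow-down/implicit function argument controlled by the deviation of $J$ from $J_0$ at $x_0$ -- and ensuring that the size of the curves $C_{x,L}$ is bounded below uniformly in the compact parameter space; the compactness upgrade over $K$ is then routine.
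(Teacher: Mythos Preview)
Your proposal is correct and rests on the same underlying result as the paper: the paper's proof is the single line ``This follows from \cite{sikorav}, Theorem 3.1.1 (i)'', and Sikorav's Theorem 3.1.1 is precisely the Nijenhuis--Woolf type statement you invoke, namely the existence of small $J$-holomorphic disks through any point in any complex tangent direction, with smooth dependence on parameters and uniform size on compact sets. What you have done is unpack the citation: the blow-down picture and the compactness argument you sketch are exactly how one extracts from Nijenhuis--Woolf the two-point version stated in the lemma, and your identification of the local surjectivity of $\pi$ as the only nontrivial step is accurate.
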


This follows from \cite{sikorav}, Theorem $3.1.1$ (i).

The lemma, together with the unique continuation theorem established along disks, implies that the set of points where $\alpha$ vanishes identically in
a neighborhood is open. As it is clearly closed we can conclude the proof of Theorem \ref{analytic-continuation-6}.
\end{proof}

\section{Closed forms of rank $0$ and $4$} \label{duals}

In this section we construct closed forms which
have everywhere rank $0$ or $4$, have compact support near a submanifold $W^{2n-2}$ of $X^{2n}$, and are Poincar\'{e} dual to $[W] \in H_2(X)$. As explained in the introduction, although there are no pointwise obstructions, the unique continuation theorem, Theorem \ref{analytic-continuation-6}, shows that there do not exist corresponding anti-invariant almost complex
structures. This is our main proposition.

\begin{prop}\label{non-pure}
Let $W\subset X^{2n}$ be a $(2n-2)$-dimensional compact submanifold with trivial normal bundle $\nu(W)$ and $\sigma$ be a $1$-form on $W$ such that
$d\sigma$ never vanishes and $d(\sigma\wedge d\sigma)=0$. Then there is a compactly supported $2$-form $\omega$ on the total space of $\nu(W)$ which everywhere
has rank either $4$ or $0$ and is cohomologous to the Thom class $\tau$ of $\nu(W)$.
\end{prop}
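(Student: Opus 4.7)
The plan is to use the triviality of $\nu(W)$ to identify it with $W\times\R^2$ and then construct $\omega$ explicitly as the differential of a $1$-form built from $\sigma$ and a radial potential for the Thom form. With fiber coordinates $(t_1,t_2)$ and polar coordinates $(r,\theta)$ on the $\R^2$-fibre (so $dt_1\wedge dt_2 = r\,dr\wedge d\theta$), I would set
$$
\omega \;:=\; d\bigl(\chi(r)\,\sigma + G(r)\,d\theta\bigr)
\;=\; \chi'(r)\,dr\wedge\sigma + \chi(r)\,d\sigma + \psi(r)\,dt_1\wedge dt_2,
$$
where $\sigma$ is pulled back from $W$, $\psi(r):=G'(r)/r$, and $\chi,G$ are smooth functions of $r^2$ (so the expression is smooth at $r=0$). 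I would take $\chi \equiv 1$ near $r=0$ and $\chi\equiv 0$ for $r\geq R$, together with $G(0)=0$ and $G\equiv\tfrac{1}{2\pi}$ for $r\geq R$. Then $\omega$ is closed, compactly supported in $\{r\leq R\}$, and since $d(G(r)\,d\theta) = \psi(r)\,dt_1\wedge dt_2$ is a standard Thom form of $\nu(W)$ while $\omega - d(G\,d\theta) = d(\chi\sigma)$ is exact with compactly supported primitive, $\omega$ represents the Thom class $\tau$.

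The heart of the proof is the rank computation. Using $dr\wedge dt_1\wedge dt_2 = 0$ (since $dr$ lies in the span of $dt_1,dt_2$) together with the hypothesis $d(\sigma\wedge d\sigma)=0$ in its equivalent form $(d\sigma)^2 = 0$, a direct expansion gives
$$
\omega^2 \;=\; 2\chi\chi'\,dr\wedge\sigma\wedge d\sigma + 2\chi\psi\,d\sigma\wedge dt_1\wedge dt_2,
$$
and a similar computation shows $\omega^3 = 0$. Hence $\omega$ has rank at most $4$ everywhere, and it only remains to verify that $\omega^2\neq 0$ wherever $\omega\neq 0$.

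The two summands of $\omega^2$ involve distinct exterior types (one contains a single fibre factor $dr$, the other contains both $dt_1$ and $dt_2$), so they cannot cancel, and $\omega^2=0$ forces each of them to vanish. Since $d\sigma$ is nowhere zero, $d\sigma\wedge dt_1\wedge dt_2$ is nowhere zero and vanishing of the second summand reduces to $\chi\psi=0$. The delicate point, which I expect to be the main obstacle, is that the hypothesis controls only $(d\sigma)^2$ and gives no information about $\sigma\wedge d\sigma$, which may well vanish on $W$; therefore we cannot count on the first summand on any annulus where $\chi'\neq 0$ but $\psi=0$. This forces a careful choice of cutoffs: one should take $\chi$ and $\psi$ with the \emph{same} support, both strictly positive on $\{r<R\}$ and smoothly vanishing at $r=R$ (easily arranged by choosing $\chi$ and $G$ as matching smooth bumps of $r^2$). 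With supports matched, on $\{r<R\}$ we have $\chi\psi>0$ so $\omega^2\neq 0$, while on $\{r\geq R\}$ all three terms of $\omega$ vanish simultaneously and $\omega\equiv 0$. Thus $\omega$ has rank $4$ on $\{r<R\}$ and rank $0$ elsewhere, as required.
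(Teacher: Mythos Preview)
Your argument is correct and follows essentially the same route as the paper. The paper's version is slightly more streamlined: it uses a \emph{single} bump function $r(x_1,y_1)$ (equal to $1$ near $W$) and sets $\omega=\tau+d(r\sigma)$ with $\tau=r\,dx_1\wedge dy_1$; the computation $\omega^2=2r(\tau+dr\wedge\sigma)\wedge d\sigma$ then gives $\omega^2\neq 0$ exactly where $r\neq 0$, because the term $2r^2\,dx_1\wedge dy_1\wedge d\sigma$ already never vanishes there. In other words, by reusing the same cutoff for the Thom density and for $\sigma$, the paper sidesteps the ``delicate point'' you isolate---your support-matching of $\chi$ and $\psi$ is precisely the same maneuver in polar-coordinate dress.
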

\begin{proof}
Let $\tau =r(x_1,y_1)dx_1\wedge dy_1$, be the local expression of the Thom class of $\nu(W)$, where $r(x_1,y_1)$ is a bump function such that $r=1$ on a neighborhood of
$W$ in $TW$. Define
$$
\omega =\tau + d(r\sigma)\,.
$$
Then,
$$
\omega^2=\tau\wedge\tau + dr\wedge\sigma\wedge dr\wedge\sigma +
r^2d\sigma\wedge d\sigma + 2\tau\wedge dr\wedge \sigma +2r\tau\wedge d\sigma + 2rdr\wedge\sigma\wedge d\sigma\,.
$$
Therefore,
$$
\omega^2 = 2r(\tau + dr\wedge\sigma)\wedge d\sigma\,,
$$
and, consequently,
$$
\omega^2 \neq 0\,,
$$
where $r \neq 0$. We have
\begin{eqnarray*}
\omega^3 &=& 2r\left(\tau^2\wedge d\sigma + dr\wedge\sigma\wedge\tau\wedge d\sigma +
\tau\wedge d\sigma\wedge dr\wedge \sigma + dr\wedge \sigma\wedge d\sigma \wedge dr\wedge \sigma \right)+\\
& {}& +\, 2r^2\left( \tau\wedge (d\sigma)^2 +
 dr\wedge \sigma\wedge (d\sigma)^2 \right)=0\,.
\end{eqnarray*}
Therefore, at each point, $\omega$ has rank $4$ or $\Omega =0$.
\end{proof}

The condition in Proposition \ref{non-pure} that $W$ has a $1$-form with certain properties is often satisfied. Indeed, let $\T^6=\R^6\backslash\Z^6$ be the $6$-dimensional standard torus. Denote by $(x_1,\ldots ,x_6)$ coordinates on $\R^6$. Let $W=\{[(x^1,\ldots ,x^6)]\in\T^6\,\,\,\vert\,\,\, x_5=x_6=0\}$. Then the Poincar\'e dual $\mathfrak{a}$ of $W$ is given by $dx_5\wedge dx_6$. Set
$$
\sigma =\cos(2\pi x_3)dx_1 + \sin(2\pi x_3)dx_2\,.
$$
Then
$$
d\sigma = 2\pi\left(\sin(2\pi x_3) dx_1\wedge dx_3 -\cos(2\pi x_3) dx_2\wedge dx_3 \right)
$$
is never vanishing on $W$. We immediately obtain
$$
d(\sigma\wedge d\sigma) =0\,.
$$

\end{document}